\newtheorem{theorem}{Theorem}
\newtheorem{lemma}{Lemma}
\newtheorem{definition}[theorem]{Definition}
\newcommand{\Z}{\mathbb{Z}}
\newcommand{\Zp}{\mathbb{Z}/p\mathbb{Z}}
\journal{Journal}
\begin{document}

\begin{frontmatter}



\title{ONE  STEP  FURTHER  OF  AN  INVERSE  THEOREM  FOR THE  RESTRICTED  SET  ADDITION  IN $\Z/p\Z$}

\author{David Fernando Daza Urbano}
\author{René González-Martínez}


\author{Mario Huicochea Mason}
\author{Amanda Montejano Cantoral}

\begin{abstract}
Let $A$ and $B$ be sets of $k\ge5$ elements in $F=\Z/p\Z$ the field with $p>2k-2$ elements. We denote by $A\dot{+}B$ the set  of different elements of $F$ that can be written in the form $a+b$, where $a\in A$, $b\in B$, $a\neq b$. The number of elements of this set is at least $2k-3$.  K\'{a}rolyi showed that, except from some particular cases, The equality can only occur if $A = B$ and $A$ is an arithmetic progression with non zero difference. We prove that in the case that $|A\dot{+}B| = 2k - 2$ and $|A|=|B|$ the equality $A=B$ holds.
\end{abstract}

\end{frontmatter}


\section{Preliminars}
\label{sec:preliminars}
    Let $p$ be a prime, and let $A$ and $B$ be subsets of the field $F=\Z/p\Z$ we consider the sets
    \[
        A+B=\{a+b|a \in A, b \in B\}
    \]
    called the \emph{sumset} of $A$ and $B$, and 
    \[
        A\dot{+}B=\{a+b|a \in A, b \in B \text{ and }a\neq b\}
    \]
    called the \emph{restricted sumset} of $A$ and $B$.
    
    The Cauchy-Davenport Theorem \cite{Cauchy,Davenport} states that if $p\ge k+\ell -1$, then $|A+B|\ge k+\ell-1$ holds for every $A$ and $B$ where $k=|A|$ and $\ell=|B|$. In $1996$ Alon, Nathanson, and Ruzsa provided a simple proof of this theorem using the Polynomial Method \cite{Alon}.

    Gyula K\'{a}rolyi extended this result for abelian groups. To state this result, first we introduce some notation.

    \begin{definition}
        Let $G$ be a group.  We define $p(G)$ to be the smallest positive integer $p$ for which there exists a nonzero element $g$ of $G$ with $pg=0$. If no such $p$ exists, we write $p(G)=\infty$.
    \end{definition}
    Using the same notation as before.
    
    \begin{theorem}[K\'{a}rolyi \cite{Gyula1,Gyula2}]
        If $A$ and $B$ are nonempty subsets of an abelian group $G$ such that $p(G)\ge k+\ell-1$, then $|A+B|\ge k+\ell-1$.
    \end{theorem}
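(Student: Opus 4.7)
The plan is to deduce K\'arolyi's bound from Kneser's addition theorem, which applies in every abelian group and provides a sharp lower bound in terms of the stabilizer of $A+B$. The hypothesis $p(G)\ge k+\ell-1$ will be invoked only at the very end, to force any possible nontrivial period of $A+B$ to be large.

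First I would reduce to a finitely generated setting. Let $G_0=\langle A\cup B\rangle$; then $G_0$ is a finitely generated abelian group, $A+B$ is unchanged when the ambient group is restricted to $G_0$, and $p(G_0)\ge p(G)\ge k+\ell-1$ because any nonzero $g\in G_0$ with $pg=0$ is also a nonzero torsion element of $G$, so the candidate minima for $G_0$ form a subset of those for $G$. Thus without loss of generality I may assume $G=G_0$.

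Next I would apply Kneser's theorem to obtain
\[
|A+B|\;\ge\;|A|+|B|-|H|,\qquad H:=\{g\in G:g+(A+B)=A+B\}.
\]
If $H=\{0\}$, then $|A+B|\ge k+\ell-1$ is immediate. Otherwise, since $A+B$ is a nonempty finite union of $H$-cosets, the subgroup $H$ itself must be finite and nontrivial; by Cauchy's theorem it contains an element of prime order $q$, and by the definition of $p(G)$ this prime satisfies $q\ge p(G)\ge k+\ell-1$. Consequently $|H|\ge q\ge k+\ell-1$, and because $A+B$ contains at least one full coset of $H$ we conclude $|A+B|\ge|H|\ge k+\ell-1$.

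The principal obstacle is that this route delegates all the real work to Kneser's theorem, whose own proof via the Dyson $e$-transform is far from trivial. A more self-contained alternative would use the structure theorem to write $G_0\cong\Z^r\oplus T$ and to seek a homomorphism into $\Z$ (when $T=0$) or into $\Z/q\Z$ for a prime $q\ge k+\ell-1$ dividing $|T|$ (when $r=0$) which is injective on $A\cup B$, so that Cauchy-Davenport applies directly. The awkward case is when both the free and torsion parts are nontrivial: no single such homomorphism need be injective on $A\cup B$, and a careful induction on the elementary divisors of $T$, combined with liftings through the projection $G_0\to\Z^r$, would be required to close the argument without invoking Kneser.
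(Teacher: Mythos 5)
Your argument is correct, and there is in fact no internal proof to compare it against: the paper quotes this theorem as background and attributes it to K\'arolyi's papers without proving it. Your derivation from Kneser's theorem is the standard route (and essentially the one used in the cited literature for the unrestricted sumset): writing $H=\{g\in G: g+(A+B)=A+B\}$, Kneser's inequality $|A+B|\ge |A+H|+|B+H|-|H|\ge |A|+|B|-|H|$ settles the case $H=\{0\}$ immediately; if $H$ is nontrivial, it is finite because any coset $c+H$ with $c\in A+B$ sits inside the finite set $A+B$, Cauchy's theorem yields an element of prime order $q$ in $H$, the minimality in the definition of $p(G)$ forces $q\ge p(G)\ge k+\ell-1$, and since $A+B$ is a nonempty union of $H$-cosets one gets $|A+B|\ge |H|\ge q\ge k+\ell-1$. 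Every step checks out. Two minor remarks: the opening reduction to the finitely generated subgroup $\langle A\cup B\rangle$ is superfluous, since Kneser's theorem holds in an arbitrary abelian group (your monotonicity claim $p(G_0)\ge p(G)$ is correct but never needed); and your sketched Kneser-free alternative via homomorphisms into $\mathbb{Z}$ or $\Z/q\Z$ is candid about its unresolved mixed torsion/free case, but since the Kneser route is complete this does not affect the proof. What your approach buys is exactly the right division of labor: all the combinatorial difficulty is concentrated in one classical theorem, and the hypothesis $p(G)\ge k+\ell-1$ enters only at the end, to rule out small nontrivial periods of $A+B$.
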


    It's natural to ask the inverse problem. Namely, which conditions should be imposed on $A$ and $B$ such that the equality in the Cauchy-Davenport holds?

    We say that a pair of subsets $A$ and $B$ are critical if $A+B \neq \Z/p\Z$ and $|A+B|<|A|+|B|$. Vosper completely classified all the critical pairs \cite{Vosper}.

    \begin{theorem}[Vosper]
        Let $A$ and $B$ be subsets of $F=\Zp$ such that $|A+B|<|F|$. Then
        \[
            |A+B| = |A|+|B|-1
        \]
        if and only if at least one of the following holds:
        \begin{itemize}
            \item $\min(|A|,|B|)=1$,
            \item $|A+B|=p-1$ and $B=F\backslash (c - A)$, where $c=F\backslash(A+B)$ 
            \item $A$ and $B$ are arithmetic progressions with the same common difference.
        \end{itemize}
    \end{theorem}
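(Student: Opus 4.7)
The plan is to prove Vosper's theorem by splitting into three regimes according to the size of $|A+B|$ and proceeding by induction on $\min(|A|,|B|)$ in the interesting regime. The trivial case $\min(|A|,|B|)=1$ is immediate, since then $A+B$ is a translate of the larger set and automatically has size $|A|+|B|-1$. So I would assume $|A|,|B|\ge 2$ throughout the rest of the argument.

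Next I would dispose of the case $|A+B|=p-1$. Letting $c$ denote the unique element of $F\setminus(A+B)$, if some $a\in A$ satisfied $c-a\in B$ then $c\in A+B$, a contradiction. Hence $B\subseteq F\setminus(c-A)$, a set of size $p-|A|$; combined with the numerical hypothesis $|B|=|A+B|-|A|+1=p-|A|$, this forces the equality $B=F\setminus(c-A)$, which is the second bullet.

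The remaining, principal case is $|A|,|B|\ge 2$ with $|A+B|\le p-2$, where I want to conclude that $A$ and $B$ are arithmetic progressions with the same common difference. The base case $|B|=2$, say $B=\{b_1,b_2\}$ with $d=b_2-b_1$, reduces to $|(A+b_1)\cup(A+b_2)|=|A|+1$, i.e., $|A\cap(A+d)|=|A|-1$; decomposing $A$ into maximal $d$-runs shows that this overlap count equals $|A|$ minus the number of runs, so there is a single run and $A$ is a $d$-AP. For the inductive step I would invoke Dyson's $e$-transform: for $e\in A-B$ define
\[
    A_e=A\cup(B+e),\qquad B_e=(A-e)\cap B,
\]
which satisfies $|A_e|+|B_e|=|A|+|B|$ and $A_e+B_e\subseteq A+B$. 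Choosing $e$ so that $|B_e|<|B|$ while $|B_e|\ge 2$, the inductive hypothesis yields AP-structure for $(A_e,B_e)$ with some difference $d$, which I would then pull back to $(A,B)$ using the saturated inclusion $A_e+B_e\subseteq A+B$ together with the size equality. The hardest part is precisely this bookkeeping in the pull-back: one must select $e$ so the transform actually makes progress, prevent the sporadic configuration $|A+B|=p-1$ from reappearing for the transformed pair, and then verify that the AP-structure of $(A_e,B_e)$ forces the same AP-structure on $(A,B)$. Because $\Zp$ has no nontrivial subgroups, there are no Kneser-style stabilizer obstructions, so the descent terminates cleanly.
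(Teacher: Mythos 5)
The paper offers no proof of this theorem at all: it is quoted as background from Vosper's 1956 paper \cite{Vosper}, so your attempt can only be measured against the classical argument, which is indeed the $e$-transform route you sketch. Your easy cases are handled correctly: $\min(|A|,|B|)=1$ is trivial; the $|A+B|=p-1$ analysis ($B\subseteq F\setminus(c-A)$ plus the cardinality count $|B|=p-|A|$) is right; the base case $|B|=2$ via maximal $d$-runs is right (note $|A|\le p-2$ guarantees the run decomposition exists); and the transform identities $|A_e|+|B_e|=|A|+|B|$ and $A_e+B_e\subseteq A+B$ hold. Moreover, one of your three flagged worries evaporates immediately: by Cauchy--Davenport, $|A_e+B_e|\ge |A_e|+|B_e|-1=|A|+|B|-1=|A+B|$, so the inclusion is an equality $A_e+B_e=A+B$; in particular the transformed pair is again critical and its sumset still has size at most $p-2$, so the sporadic case cannot reappear.

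However, the two steps you defer as ``bookkeeping'' \emph{are} the theorem, and your plan contains no argument for either. First, existence of a good $e$: it is easy to find $e\in A-B$ with $|B_e|<|B|$ (if $B_e=B$ for every such $e$, then $B+(A-B)\subseteq A$, hence $A+\langle B-B\rangle=A$, and since $|B|\ge 2$ and $\Zp$ has no proper nontrivial subgroups this forces $A=\Zp$, a contradiction), but nothing guarantees an $e$ with $|B_e|\ge 2$ simultaneously: the transform can crash from $|B|\ge 3$ straight to $|B_e|=1$ for every productive $e$, and the classical proofs need an extra device exactly here --- Davenport's transform, or the dual critical pair $(\overline{C},-B)$ with $\overline{C}=F\setminus(A+B)$, for which Cauchy--Davenport forces $\overline{C}-B=F\setminus A$ with the critical count. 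Second, the pull-back is not routine verification: $A\subseteq A_e$, and a subset of an arithmetic progression need not be an arithmetic progression, so the structure of $(A_e,B_e)$ does not transfer to $(A,B)$ pointwise; what you actually learn from the inductive hypothesis is that $A+B=A_e+B_e$ is a $d$-progression, and you then need the genuinely nontrivial lemma that a critical pair with $|A+B|\le p-2$ whose \emph{sumset} is an arithmetic progression must consist of two progressions with that common difference --- proved again through the complement duality above, not by inspection. Finally, the statement is an equivalence and your plan addresses only necessity; sufficiency is easy (for the second bullet, $A+B=F\setminus\{c\}$ because $x-A=c-A$ would force $A+(c-x)=A$, impossible in $\Zp$ for $x\ne c$) but must be stated to claim the theorem.
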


    Hamidoune and Rødseth went one step further by characterizing the critical pairs of Vosper's Theorem.

    \begin{theorem}
        Let $A$ and $B$ be subsets of $\Zp$, with $|A|\ge3$ and $|B|\ge4$ such that
        \[
            |A+B|=|A|+|B|\le p-4.
        \]
        Then $A$ and $B$ are contained in arithmetic progressions with the same difference of respective lenghts $|A|+1$ and $|B|+1$.
    \end{theorem}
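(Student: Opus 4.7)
The strategy is to reduce the problem to Vosper's theorem by deleting one carefully chosen element from $A$, then tracking how the removed point can fit back into the arithmetic progression structure that Vosper produces.

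\emph{Step 1 (finding a dispensable element).} For $a\in A$, let $\delta(a)=|A+B|-|(A\setminus\{a\})+B|$. By Cauchy--Davenport $|(A\setminus\{a\})+B|\ge|A|+|B|-2$, while $|A+B|=|A|+|B|$, so $\delta(a)\in\{0,1,2\}$. I aim to locate an element $a_0\in A$ with $\delta(a_0)=2$, placing the pair $A'=A\setminus\{a_0\}$, $B$ inside the equality regime of Cauchy--Davenport.

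\emph{Step 2 (applying Vosper).} For the pair $(A',B)$ we have $|A'+B|=|A'|+|B|-1$, and moreover $|A'+B|\le p-6<p-1$ and $\min(|A'|,|B|)\ge 2$ (using $|A|\ge 3$ and $|B|\ge 4$). Vosper's theorem therefore forces $A'$ and $B$ to be arithmetic progressions with the same common difference $d\ne 0$. After the dilation $x\mapsto d^{-1}x$ I may assume $d=1$, and write
\begin{equation*}
A'=\{\alpha,\alpha+1,\ldots,\alpha+|A|-2\},\qquad B=\{\beta,\beta+1,\ldots,\beta+|B|-1\}.
\end{equation*}

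\emph{Step 3 (placing $a_0$).} Since $|A+B|\le p-4<p$, the sumset lies inside an arc of length less than $p$, so I may identify the relevant sets with genuine integer intervals. Then $A'+B=\{\alpha+\beta,\ldots,\alpha+\beta+|A|+|B|-3\}$ is an interval of length $|A|+|B|-2$, and the hypothesis $|A+B|=|A|+|B|$ says that $(a_0+B)\setminus(A'+B)$ has exactly two elements. Since $a_0+B$ is a translate of the interval $B$, a direct interval-overlap check forces $a_0\in\{\alpha-2,\;\alpha+|A|\}$. In either case $A$ is contained in an arithmetic progression of length $|A|+1$ and common difference $d$, namely $\{\alpha-2,\ldots,\alpha+|A|-2\}$ or $\{\alpha,\ldots,\alpha+|A|\}$. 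Running Steps 1--3 with the roles of $A$ and $B$ exchanged produces an arithmetic progression of length $|B|+1$ and common difference $d$ containing $B$, completing the proof.

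The principal obstacle will be Step~1: guaranteeing the existence of some $a_0$ with $\delta(a_0)=2$. A straightforward count using $\sum_{x\in A+B}r(x)=|A|\,|B|$, where $r(x)=\#\{(a,b)\in A\times B:a+b=x\}$, only estimates the number of elements of $A+B$ with unique representation, and this estimate is too weak when $|A|\,|B|$ is large. I expect to need a more delicate argument — likely a stabilizer/Kneser analysis on $A+B$, or a separate treatment of the ``robust'' case in which every single deletion in $A$ (and symmetrically in $B$) decreases the sumset by at most one element, which should be incompatible with the size hypotheses $|A|\ge 3$ and $|B|\ge 4$.
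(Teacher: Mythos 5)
Your Steps 2--3 are fine as conditional reasoning, but the gap you yourself flag at Step 1 is not merely a delicate point that a cleverer count will fix: the element you need does not exist in general, and your hoped-for dichotomy fails. Take $A=\{0,1,3\}$ and $B=\{0,1,2,4\}$ in $\Zp$ with $p\ge 11$. Then $A+B=\{0,1,2,3,4,5,7\}$, so $|A+B|=7=|A|+|B|\le p-4$, and the pair satisfies every hypothesis of the theorem (and its conclusion, with $A\subseteq\{0,1,2,3\}$ and $B\subseteq\{0,1,2,3,4\}$). Yet a direct check gives $\delta(0)=\delta(3)=1$ and $\delta(1)=0$, so no $a_0\in A$ has $\delta(a_0)=2$; and deleting from $B$ instead gives drops $1,0,0,1$ for $b=0,1,2,4$, so the symmetric attempt fails too. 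Thus the ``robust'' case in which every one-point deletion loses at most one sum is perfectly compatible with $|A|\ge3$, $|B|\ge4$ — indeed it is the \emph{generic} critical configuration for this theorem, occurring whenever both $A$ and $B$ are arithmetic progressions punctured at an interior point — and your reduction to the equality case of Vosper's theorem never gets off the ground. (Your closing suggestion of a Kneser/stabilizer analysis also cannot help here, since $\Zp$ has no proper nontrivial subgroups.)

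For the comparison you were asked to make: the paper does not prove this statement at all; it quotes it as background, citing Hamidoune and R{\o}dseth, whose actual argument proceeds by Hamidoune's isoperimetric method (an analysis of $2$-atoms and fragments of the associated Cayley graph), a genuinely different mechanism from deletion-plus-Vosper. If you want to salvage something in the spirit of your plan, the workable variant is an induction on $|A|$ using $\delta$-values $\le 1$: deleting a suitable element keeps you in the regime $|A'+B|\le |A'|+|B|$, Vosper handles the case where equality drops to $|A'|+|B|-1$, and the case $|A'+B|=|A'|+|B|$ recurses — but then you must control how the single ``hole'' propagates back up through the induction, which is substantially more bookkeeping than your Step 3 interval-overlap check, and the base cases with small $|A|$ need separate care. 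As written, the proposal has a genuine missing idea at its foundation.
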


    Closely related to the Cauchy-Davenport Theorem there is conjecture Paul Erd\H{o}s and Hans Heilbronn posed in the early 1960s for the restricted sumset. Namely, the lower bound changes only slightly \cite{Erdos Col. Con.,Erdos, Erdos Graham}. 

    \begin{theorem}[Erd\H{o}s-Heilbronn Problem]
    If $A$ and $B$ are non-empty subsets of $F=\Z/p\Z$ with $p$ prime, and $p\ge k+\ell-3$ then $|A\dot{+}B|\ge k+\ell-3\}$
    \end{theorem}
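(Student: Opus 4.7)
The plan is to follow the polynomial-method proof of the Cauchy--Davenport theorem due to Alon, Nathanson and Ruzsa \cite{Alon}, applying the Combinatorial Nullstellensatz to a polynomial that encodes both the sum constraint and the restriction $a\neq b$ simultaneously. I would argue by contradiction, assuming $|A\dot{+}B|\leq k+\ell-4$. Since $p\geq k+\ell-3$, there is a set $C\subseteq F$ with $A\dot{+}B\subseteq C$ and $|C|=k+\ell-3$, and I would work with
\[
f(x,y)=(x-y)\prod_{c\in C}(x+y-c)\in F[x,y].
\]
For every $(a,b)\in A\times B$, either $a=b$ kills the factor $x-y$, or $a\neq b$ so that $a+b\in A\dot{+}B\subseteq C$ kills one of the linear factors; in either case $f(a,b)=0$.

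Next I would invoke the Combinatorial Nullstellensatz with target monomial $x^{k-1}y^{\ell-1}$, whose total degree matches $\deg f=k+\ell-2$. Only the top homogeneous piece $(x-y)(x+y)^{k+\ell-3}$ of $f$ can contribute, and a short binomial manipulation yields the coefficient
\[
\binom{k+\ell-3}{k-2}-\binom{k+\ell-3}{k-1}=\frac{k-\ell}{k-1}\binom{k+\ell-3}{k-2}.
\]
When $k\neq\ell$ and $p>k+\ell-3$, none of the factors on the right is divisible by $p$, so this element of $F$ is nonzero. Since $|A|>k-1$ and $|B|>\ell-1$, the Nullstellensatz then produces $(a,b)\in A\times B$ with $f(a,b)\neq 0$, contradicting the vanishing established above.

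The main obstacle is the symmetric case $k=\ell$, in which the coefficient just computed vanishes identically. My plan here is to replace $x-y$ by $(x-y)^2$ and to work instead with
\[
g(x,y)=(x-y)^2\prod_{c\in C'}(x+y-c),
\]
where $|C'|=2k-4$ and $C'\supseteq A\dot{+}B$, using the contradiction hypothesis $|A\dot{+}B|\leq 2k-4=k+\ell-4$. Again $g$ vanishes on $A\times B$, and expanding $(x-y)^2(x+y)^{2k-4}$ shows that the coefficient of $x^{k-1}y^{k-1}$ equals $-\frac{2}{k-1}\binom{2k-4}{k-2}$, which is nonzero modulo any prime $p\geq 2k-3$. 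The Combinatorial Nullstellensatz then closes this case as well. A small routine verification at the end confirms that the hypothesis $p\geq k+\ell-3$ does leave enough room in $F$ to carry out the padding of $C$ (and $C'$) in both instances.
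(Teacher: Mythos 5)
You should first know that the paper never proves this statement: it appears in the preliminaries as a quoted result, with the proof attributed to Dias da Silva--Hamidoune \cite{Dias da Silva} and to Alon--Nathanson--Ruzsa \cite{Alon}. So the right comparison is with the ANR polynomial-method proof the paper invokes. For $k\neq\ell$ your argument is exactly ANR's, and your coefficient computation $\frac{k-\ell}{k-1}\binom{k+\ell-3}{k-2}$ is correct. For $k=\ell$ you genuinely diverge: ANR handle the symmetric case by deleting one element of $B$ to recover the asymmetric bound, $|A\dot{+}B|\ge|A\dot{+}B'|\ge k+(\ell-1)-2=k+\ell-3$, whereas you keep both sets intact and square the factor, working with $(x-y)^2\prod_{c\in C'}(x+y-c)$. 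This variant is valid: the Nullstellensatz only needs $g$ to vanish on $A\times B$, not simple vanishing, and your coefficient $-\frac{2}{k-1}\binom{2k-4}{k-2}$ of $x^{k-1}y^{k-1}$ is correct and nonzero for every odd $p\ge 2k-3$, since $p>2k-4$ keeps the central binomial coefficient away from $p$ and $k-1<p$. It even handles the boundary $p=2k-3$ directly, which the deletion trick also reaches but less transparently; what the deletion trick buys instead is that it needs no new computation. (For $p=2$ the factor $2$ dies, but then $k\le 2$ and the claim is trivial.)

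There is, however, a genuine gap in your asymmetric case at the boundary $p=k+\ell-3$ with $k\neq\ell$, which the theorem's hypothesis $p\ge k+\ell-3$ allows. Your nonvanishing claim is explicitly conditioned on $p>k+\ell-3$, and at equality it is false: the padded set $C$ must then be all of $F$, and $\binom{k+\ell-3}{k-2}=\binom{p}{k-2}\equiv 0\pmod p$ whenever $0<k-2<p$ (which holds in every nontrivial instance of this boundary case), so the Nullstellensatz yields no contradiction. Your closing remark that a ``routine verification'' confirms there is room to pad does not address this --- the obstruction is the vanishing coefficient, not the size of $F$. Two standard repairs: (i) abandon polynomials at the boundary and use pigeonhole --- if $p=k+\ell-3$ then each $c\in F$ has at least $k+\ell-p=3$ representations $c=a+b$ with $(a,b)\in A\times B$, of which at most one has $a=b$, so $A\dot{+}B=F$ and the bound holds with equality; or (ii) stay with the polynomial method but pad only to $|C|=p-1$ and target the monomial $x^{k-1}y^{\ell-2}$, whose coefficient in $(x-y)(x+y)^{p-1}$ is $\binom{p-1}{k-2}-\binom{p-1}{k-1}\equiv(-1)^{k-2}-(-1)^{k-1}=2(-1)^{k}\not\equiv 0\pmod p$ by the identity $\binom{p-1}{j}\equiv(-1)^j$. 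With either patch your proof is complete.
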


    In 1994 Dias da Silva and Hamidoune \cite{Dias da Silva} proved that for the special case $A=B$
    \[
        |A\dot{+}B|\ge 2k-3
    \]
    using exterior algebra methods. Their proof for the general case was established later by Alon, Nathanson and Ruzsa \cite{Alon} using the so called \emph{polynomial method}.

    \begin{lemma}[Combinatorial Nullstellensatz]\label{CombNull}
        Let $F$ be an arbitrary field and let $f=f(x_1,\ldots,x_n)$ be a polynomial in $F[x_1,\ldots,x_n]$. Let $S_1,\ldots,S_n$ be nonempty subsets of $F$ and define $g_i(x_i)=\prod_{s\in S_i}(x_i-s)$. If $f(s_1,\ldots,s_n)=0$ for all $s_i\in S_i$, the there exist polynomials $h_i(x_1,\ldots,x_n)\in F[x_1,\ldots,x_n]$ for every $i\in\{1,\ldots,n\}$ satisfying $\deg(h_i)\le\deg(f)-\deg(g_i)$ such that
        \[
            f=\sum_{i=1}^{n}h_i(x_1,\ldots,x_n)g_i(x_i).
        \]
    \end{lemma}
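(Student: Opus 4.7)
The plan is to follow Alon's standard polynomial-method strategy: first reduce $f$ modulo the univariate polynomials $g_i$ by successive long divisions, and then show that the remainder must vanish identically because its partial degrees in each variable are too small.

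For the reduction, observe that $g_i(x_i) = x_i^{|S_i|} + (\text{terms of degree} < |S_i| \text{ in } x_i)$ is monic in $x_i$. I would process the variables one at a time: set $f^{(0)} := f$ and, for $i = 1, \ldots, n$, perform polynomial long division of $f^{(i-1)}$ by $g_i$, viewing each as a polynomial in $x_i$ with coefficients in the ring $F[x_1, \ldots, \widehat{x_i}, \ldots, x_n]$. Each step produces
\[
f^{(i-1)} = h_i\, g_i + f^{(i)}, \qquad \deg_{x_i}(f^{(i)}) < |S_i|.
\]
Because $g_i$ is monic no denominators appear, and a direct check of the division algorithm gives the total-degree bounds $\deg(h_i) \le \deg(f^{(i-1)}) - \deg(g_i)$ and $\deg(f^{(i)}) \le \deg(f^{(i-1)})$; iterating yields $\deg(h_i) \le \deg(f) - \deg(g_i)$. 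Since $g_j$ for $j > i$ does not involve $x_i$, the subsequent divisions do not reintroduce large powers of $x_i$, so writing $\tilde f := f^{(n)}$ I obtain
\[
f = \sum_{i=1}^{n} h_i(x_1, \ldots, x_n)\, g_i(x_i) + \tilde f, \qquad \deg_{x_j}(\tilde f) < |S_j| \text{ for every } j.
\]

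To finish, I show $\tilde f \equiv 0$. Each $g_i$ vanishes whenever $x_i \in S_i$, so $\sum h_i g_i$ vanishes on $S_1 \times \cdots \times S_n$; together with the hypothesis on $f$, this forces $\tilde f$ to vanish there as well. An induction on $n$ closes the argument: for any fixed $(s_1, \ldots, s_{n-1}) \in S_1 \times \cdots \times S_{n-1}$, the polynomial $\tilde f(s_1, \ldots, s_{n-1}, x_n)$ has degree $< |S_n|$ in $x_n$ and vanishes at all $|S_n|$ points of $S_n$, hence is the zero polynomial. Therefore every coefficient of $\tilde f$, regarded as a polynomial in $x_n$ over $F[x_1, \ldots, x_{n-1}]$, vanishes on $S_1 \times \cdots \times S_{n-1}$; those coefficients still have degree $< |S_j|$ in each $x_j$ for $j < n$, so the inductive hypothesis forces them to be identically zero, and hence $\tilde f = 0$.

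The main obstacle, and the only place requiring real care, is the degree bookkeeping in the first paragraph: one must check that the total-degree bound $\deg(h_i) \le \deg(f) - \deg(g_i)$ is genuinely preserved throughout the cascade of one-variable divisions, and that the partial-degree bounds achieved for $x_1, \ldots, x_{i-1}$ are not spoiled when we subsequently divide by $g_i, g_{i+1}, \ldots$. Once that accounting is in place, the rest of the proof reduces to the elementary one-variable fact that a polynomial of degree less than $|S|$ vanishing on an $|S|$-set is zero.
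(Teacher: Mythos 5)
Your proof is correct, but there is nothing in the paper to compare it against: the paper states this lemma without proof, quoting it as a known result from the polynomial-method literature (Alon--Nathanson--Ruzsa, and Alon's Combinatorial Nullstellensatz paper). What you have reconstructed is essentially Alon's original argument, and it is sound: successive division by the monic polynomials $g_i(x_i)$ works because $g_j$ for $j\neq i$ does not involve $x_i$ and each $g_i$ is monic in $x_i$, so, as you note, later divisions neither introduce denominators nor spoil the partial-degree bounds $\deg_{x_j}(\tilde f)<|S_j|$ already achieved, and the same monicity gives $\deg(h_i)\le\deg(f^{(i-1)})-\deg(g_i)\le\deg(f)-\deg(g_i)$ since the total degree never increases along the cascade. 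Your induction on $n$ showing that a polynomial with $\deg_{x_j}<|S_j|$ for all $j$ vanishing on $S_1\times\cdots\times S_n$ is identically zero is the standard grid lemma and is stated correctly, including the key step of passing from the vanishing of $\tilde f(s_1,\ldots,s_{n-1},x_n)$ to the vanishing of each coefficient in $x_n$ on the smaller grid. The only point deserving the care you flagged is the degree bookkeeping, and your accounting of it is accurate.
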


    Again, we can ask for inverse theorems, Gyula Károlyi proved the following theorem \cite{Gyula3}.

    \begin{theorem}
        Let $A$ and $B$ subsets of $F=\Zp$ with $p>|A|+|B|-3$, $|A|\ge5$ and $|B|\ge5$. Then, $|A+B|=|A|+|B|-3$ if and only if $A=B$ and $A$ is an arithmetic progression.
    \end{theorem}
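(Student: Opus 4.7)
The ``if'' direction is a routine verification: for $A=B=\{a+id:0\le i\le k-1\}$ an arithmetic progression with common difference $d\ne 0$, the sums $a_i+a_{i'}$ with $i\ne i'$ range exactly over $\{2a+sd:1\le s\le 2k-3\}$, so $|A\dot{+}A|=2k-3$.

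For the ``only if'' direction, assume $|A\dot{+}B|=2k-3$ with $|A|=|B|=k\ge 5$ and $p>2k-3$. A first key observation is that $A+B=(A\dot{+}B)\cup\{2c:c\in A\cap B\}$, so combining with the Cauchy--Davenport lower bound yields
\[
2k-1 \le |A+B| \le (2k-3)+|A\cap B|,
\]
and therefore $|A\cap B|\ge 2$; in particular $A$ and $B$ are already partially aligned. To extract further structure I would apply the polynomial method (Lemma~\ref{CombNull}) to
\[
f(x,y)=(x-y)\prod_{c\in A\dot{+}B}(x+y-c),
\]
a polynomial of degree $2k-2$ vanishing on $A\times B$. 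Writing $f=h_1(x,y)\prod_{a\in A}(x-a)+h_2(x,y)\prod_{b\in B}(y-b)$ with $\deg h_i\le k-2$ and comparing coefficients of monomials of total degree $2k-2$ produces algebraic identities between the elementary symmetric functions of $A$ and $B$.

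The decisive step is to combine these identities with $|A\cap B|\ge 2$ in order to conclude that $A$ and $B$ are arithmetic progressions with a common difference $d$; this can be done either directly from the polynomial identities, or by first bootstrapping to $|A+B|=2k-1$ and invoking Vosper's Theorem. Once that is achieved, translating so that $A=\{0,d,\ldots,(k-1)d\}$ and $B=\{jd,\ldots,(j+k-1)d\}$ for some $j$, a short case analysis (diagonal losses remove an element from $A+B$ only when the unique representation $(i,\ell)$ with $i+\ell$ extremal satisfies $i-\ell=j$) shows $|A\dot{+}B|=2k-1$ whenever $j\ne 0$ and $|A\dot{+}A|=2k-3$ when $j=0$. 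This forces $j=0$, hence $A=B$.

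\textbf{Main obstacle.} The hardest part will be the extraction of arithmetic progression structure at the borderline degree $2k-2$: in $(x-y)(x+y)^{2k-3}$ the coefficient of $x^{k-1}y^{k-1}$ vanishes because $\binom{2k-3}{k-1}=\binom{2k-3}{k-2}$, so a direct application of Combinatorial Nullstellensatz gives neither a contradiction nor immediate structural data. Overcoming this requires either a careful analysis of sub-leading coefficients or a reduction to classical sumset inverse theorems, and the hypothesis $|A|,|B|\ge 5$ is used precisely to rule out small-$k$ degeneracies (in particular the exceptional cases in Vosper's Theorem and the Hamidoune--R\o{}dseth theorem).
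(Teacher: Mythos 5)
You should first be aware that the paper does not prove this statement: it appears in Section~\ref{sec:preliminars} as K\'arolyi's inverse theorem, quoted with a pointer to \cite{Gyula3}, so your plan can only be measured against the statement itself and against the Nullstellensatz machinery the paper uses for its own theorem. Measured that way, your proposal has a genuine gap: the step you yourself call decisive is not carried out, and neither of the two routes you offer for it works as described. The bootstrap route fails because your inequalities give only $2k-1\le |A+B|\le (2k-3)+|A\cap B|$; this yields $|A\cap B|\ge 2$ but no upper bound $|A+B|\le 2k-1$, since \emph{a priori} $|A\cap B|$ can be as large as $k$ and $|A+B|$ as large as $3k-3$. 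Concluding $|A+B|=2k-1$ would require knowing that all but two of the doublings $2c$ with $c\in A\cap B$ already lie in $A\dot{+}B$ --- which is precisely the structural information you are trying to establish (in the extremal example, $A=B$ an arithmetic progression, one has $|A\cap B|=k$ and exactly the two extreme doublings are missing). Moreover, even where $|A+B|=2k-1$ could be secured, Vosper's theorem requires $|A+B|<p$ (and Hamidoune--R{\o}dseth requires $|A+B|\le p-4$), while the hypothesis $p>2k-3$ allows $p=2k-1$, in which case $A+B=F$ and Vosper says nothing; the assumption $|A|,|B|\ge 5$ excludes small sporadic configurations, not these boundary primes, so a separate argument is unavoidable there. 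The direct route is conceded open by your own ``main obstacle'' paragraph: since the coefficient of $x^{k-1}y^{k-1}$ in $(x-y)(x+y)^{2k-3}$ vanishes (as $\binom{2k-3}{k-1}=\binom{2k-3}{k-2}$), Lemma~\ref{CombNull} yields identities among the $\sigma_i(A)$ and $\sigma_i(B)$, but you exhibit no mechanism that extracts an arithmetic progression from them. Finally, you silently assume $|A|=|B|=k$; the theorem allows $|A|\neq|B|$, and part of its content is that equality then cannot hold --- this is patchable by quoting the Alon--Nathanson--Ruzsa bound $|A\dot{+}B|\ge\min(p,|A|+|B|-2)$ valid for $|A|\neq|B|$ \cite{Alon}, but it has to be said.

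For contrast, the proof the paper does give (for the next case, $|A\dot{+}B|=2k-2$ with $|A|=|B|=k$) shows exactly why your borderline case is harder: there $\deg f=2k-1$, the relevant coefficients $C_{2k-1,j}$ of the top form $(x-y)(x+y)^{2k-2}$ are nonzero modulo $p$, and an induction over the homogeneous components of the Nullstellensatz identity forces $\sigma_i(A)=\sigma_i(B)$ for every $i$, hence $A=B$. Note that this style of coefficient matching can only ever output $A=B$; it cannot output ``$A$ is an arithmetic progression'', which is the actual conclusion here. In your problem the top form is the degenerate $(x-y)(x+y)^{2k-3}$, so even the $A=B$ part does not come for free, and the progression structure must be produced by some additional idea --- in \cite{Gyula3} that is the bulk of the work. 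Your ``if'' direction, the identity $A+B=(A\dot{+}B)\cup\{2c:c\in A\cap B\}$ with the consequence $|A\cap B|\ge2$, and the endgame once both sets are known to be progressions with a common difference are all correct; what is missing is the core of the proof.
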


\section{Restricted sumsets: The case $|A|=|B|=k$}
    For any subset $X\subseteq F=\Z/p\Z$ we denote by $\sigma_i(X)$ with $1\le i \le|X|$ to the $i$th symmetrical polynomial of $X=\{x_1,x_2,\ldots,x_{|X|}\}$ and $\sigma_0(X)=1$. We use the same strategy proposed by Alon et. al. to prove the following.
\begin{theorem}
    Let $A$ and $B$ be two subsets of $F=\Z/{p\Z}$ with $|A|=|B|=k$ such that $|A\dot{+}B| = 2k-2$. Then $A=B$.
\end{theorem}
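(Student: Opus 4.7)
The plan is to apply the Combinatorial Nullstellensatz (Lemma~\ref{CombNull}) to the polynomial
\[
P(x, y) \;=\; (x - y)\prod_{c \in C}(x + y - c), \qquad C := A \dot{+} B,
\]
and to combine the resulting decomposition with the Cauchy--Davenport bound. Since $|C| = 2k - 2$, the polynomial $P$ has total degree $2k - 1$ and vanishes on $A \times B$: the factor $(x - y)$ kills the diagonal pairs, and for $a \neq b$ one factor in the product vanishes because $a + b \in C$. Setting $g_A(x) = \prod_{a \in A}(x - a)$ and $g_B(y) = \prod_{b \in B}(y - b)$, Lemma~\ref{CombNull} then yields $P = h_1 g_A + h_2 g_B$ with $\deg h_1, \deg h_2 \le k - 1$.

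A diagonal analysis comes first. Setting $y = x$ gives $h_1(x, x)\, g_A(x) = -h_2(x, x)\, g_B(x)$; evaluating at $x \in B \setminus A$ (respectively $x \in A \setminus B$) forces $g_{B \setminus A}(x) \mid h_1(x, x)$ and $g_{A \setminus B}(x) \mid h_2(x, x)$. Using $g_{B \setminus A}\, g_A = g_{A \setminus B}\, g_B = g_{A \cup B}$, both quotients are controlled by a \emph{single} univariate polynomial $w(x)$ of degree at most $k - 1 - m$, where $m := |A \setminus B| = |B \setminus A|$, namely $h_1(x, x) = g_{B \setminus A}(x)\, w(x)$ and $h_2(x, x) = -g_{A \setminus B}(x)\, w(x)$. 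Expanding $h_i(x, y) = h_i(x, x) + (y - x)\, H_i(x, y)$ and using the standard factorization $g_B(x) - g_B(y) = (x - y)\, \Phi_B(x, y)$, then cancelling the $(x - y)$ factor against $P(x, y) = (x - y)\, Q(x + y)$ with $Q(s) = \prod_{c \in C}(s - c)$, one arrives at the central identity
\[
Q(x + y) \;\equiv\; w(x)\, g_{A \setminus B}(x)\, \Phi_B(x, y) \pmod{g_A(x),\, g_B(y)}.
\]

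To conclude, suppose for contradiction that $A \ne B$, so $m \ge 1$. Evaluating the central identity at any $(a, b) \in (A \setminus B) \times B$ kills the left side ($a \ne b$ and $a + b \in C$), while $g_{A \setminus B}(a) \ne 0$ and $\Phi_B(a, b) = g_B(a)/(a - b) \ne 0$; hence $w(a) = 0$ for every $a \in A \setminus B$, so $g_{A \setminus B} \mid w$. Writing $w = g_{A \setminus B}\, \tilde w$ and substituting back, the same evaluation forces $\tilde w$ to vanish on $A \setminus B$ as well; iterating, each step reduces the degree of the remaining quotient by $m$, and after finitely many iterations it has degree strictly less than $m$ while still vanishing on $m$ distinct points, hence is identically zero. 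Unwinding yields $w \equiv 0$, so the central identity collapses to $Q(x + y) \equiv 0 \pmod{g_A,\, g_B}$, equivalently $A + B \subseteq C$. This contradicts the Cauchy--Davenport bound $|A + B| \ge 2k - 1 > 2k - 2 = |C|$, forcing $m = 0$ and $A = B$. The main obstacle is isolating the single polynomial $w$ that simultaneously governs $h_1(x, x)$ and $h_2(x, x)$ and deriving the central identity modulo $(g_A, g_B)$ cleanly; once these are in place, the iterative vanishing argument is essentially forced.
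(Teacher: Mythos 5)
Your setup and the derivation of the central identity are correct: $P(x,y)=(x-y)Q(x+y)$ vanishes on $A\times B$, the Combinatorial Nullstellensatz gives $P=h_1g_A+h_2g_B$ with $\deg h_i\le k-1$, the diagonal identity $h_1(x,x)g_A(x)=-h_2(x,x)g_B(x)$ together with $g_{B\setminus A}\,g_A=g_{A\setminus B}\,g_B=g_{A\cup B}$ does produce a single $w$ with $h_1(x,x)=g_{B\setminus A}(x)w(x)$, $h_2(x,x)=-g_{A\setminus B}(x)w(x)$, and cancelling $(x-y)$ yields $Q(x+y)\equiv w(x)\,g_{A\setminus B}(x)\,\Phi_B(x,y)\pmod{g_A(x),g_B(y)}$. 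The fatal error is in the concluding step: for $a\in A\setminus B$ you assert $g_{A\setminus B}(a)\neq 0$, but by definition $g_{A\setminus B}(a)=\prod_{t\in A\setminus B}(a-t)=0$, since the factor with $t=a$ vanishes. Evaluating the central identity at $(a,b)\in(A\setminus B)\times B$ therefore gives $0=w(a)\cdot 0\cdot\Phi_B(a,b)$, which is vacuous, and $w(a)=0$ does not follow. The failure is structural rather than a fixable slip: at every point $(a,b)\in A\times B$ with $a\neq b$ the right-hand side vanishes automatically --- if $a\in A\setminus B$ then $g_{A\setminus B}(a)=0$, while if $a\in A\cap B$ then $\Phi_B(a,b)=g_B(a)/(a-b)=0$ because $g_B(a)=0$ --- so evaluation on $A\times B$ off the diagonal can never extract information about $w$; both sides were built to vanish there. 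The only informative evaluations are the diagonal points $a=b\in A\cap B$, where one gets $Q(2a)=w(a)\,g_{A\setminus B}(a)\,g_B'(a)$, which ties $w$ on $A\cap B$ to the uncontrolled values $Q(2a)$ instead of forcing it to vanish. Hence the iteration ``write $w=g_{A\setminus B}\tilde w$ and repeat'' never starts, $w\equiv 0$ is not established, and the Cauchy--Davenport contradiction is never reached.

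For contrast, the paper applies the Nullstellensatz to the same polynomial but then works with the full two-variable identity coefficient by coefficient: it splits $f$, $h_A$, $h_B$ into homogeneous components and proves by induction on $i$ that $\sigma_i(A)=\sigma_i(B)$, exploiting the antisymmetry $C_{ij}=-C_{i,i-j}$ of the coefficients of $(x-y)(x+y)^{i-1}$ and the nonvanishing modulo $p$ (this is where $p>2k-2$ enters) of quantities such as $C_{2k-1,k-r}+C_{2k-1,k-r-1}$; equality of all elementary symmetric polynomials gives $g_A=g_B$ and thus $A=B$. That coefficient comparison is precisely the input your argument lacks: since the decomposition identity holds tautologically on $A\times B$, any valid proof must use more than evaluation there, e.g.\ control of the diagonal values $Q(2a)$ or a degree-by-degree analysis as in the paper.
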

\begin{proof}
    Let $A=\{a_1,\ldots,a_k\}$, $B=\{b_1,\ldots,b_k\}$ and $C=A\dot{+}B=\{c_1,\ldots,c_{2k-2}\}$ and let
    \[
        f(x,y)=(x-y)\prod_{j=1}^{2k-2}(x+y-c_j)\in F[x,y]
    \]
    and
    \[
        g_A(z)=\prod_{z=1}^{k}(z-a_i),\,g_B(z)=\prod_{z=1}^k(z-b_i)\in F[z].
    \]

    We can write
    \[
        f(x,y) =  \sum_{i=0}^{2k-1}f_{2k-2-i}(x,y)
    \]
    as the sum of its homogeneous components. We prove by induction on $i$ that the coefficients of $g_A(z)$ and $g_B(z)$ are equal, and consequently the roots of both polynomials must be the same.
    
    For every $x\in A$ and $y\in B$ we have that $f(x,y)=0$. Hence, by Lemma \ref{CombNull}, there exist polynomials $h_A(x,y)$ and $h_B(x,y)$ such that 
    \begin{equation}\label{eqCN}
        f(x,y) = h_A(x,y)g_A(x)+h_B(x,y)g_B(y),
    \end{equation}
    with $\deg(h_A(x,y))\le k-1$ and $\deg(h_B(x,y))\le k-1$. Decomposing $h_A$ and $h_B$ into its homogeneous components we can write
    \[
        h_A(x,y)=\sum_{i=0}^{k-1} h_{A,i}(x,y), \,\text{where} \, h_{A,i} (x,y) = \sum_{j=0}^{i}A_{ij}x^jy^{i-j}
    \]
    and
    \[
        h_B(x,y)=\sum_{i=0}^{k-1} h_{B,i}(x,y), \,\text{where} \, h_{B,i} (x,y) = \sum_{j=0}^{i}B_{ij}x^{i-j}y^{j}.
    \]
    We can also decompose $f(x,y)$ into its homogeneous components, we can rewrite $f(x,y)$ as
    \begin{equation}\label{homdecomp}
        f(x,y) = \sum_{i=0}^{2k-2}(-1)^{i}\sigma_{i}(C)p_{2k-1-i}(x,y),
    \end{equation}
    here 
    \[
        p_i(x,y)=(x-y)(x+y)^{i-1}=\sum_{j=0}^{i} C_{ij}x^jy^{i-j}
    \]
    where $C_{ii}=1$, $C_{i0}=-1$ and otherwise
    \[
        C_{ij}={\binom{i-1}{j-1}}-{\binom{i-1}{j}}=\frac{2j-i}{j}{\binom{i-1}{j-1}}=\frac{2j-i}{j}{\binom{i-1}{i-j}},
    \]
    in particular we note that $C_{ij}=-C_{i,(i-j)}$.
    
    Taking the homogeneous component of $f(x,y)$ of degree $2k-1$ and by equations \ref{eqCN} and \ref{homdecomp} we have that
    \begin{align*}
        f_{2k-1}(x,y)   &=p_{2k-1}(x,y)\\
                        &=\sum_{j=0}^{2k-1}C_{2k-1,j}x^jy^{2k-1-j}\\
                        &=h_{A,k-1}(x,y)x^k+h_{B,k-1}(x,y)y^k 
    \end{align*}
    by taking the coefficients of the monomial $x^{k+i}y^{k-i-1}$ and $x^{k-i-1}y^{k+i}$ with $0\le i\le k-1$ we conclude that
    \[
        A_{k-1,i}=C_{2k-1,k+i},
    \]
    and
    \begin{equation}\label{Ak-1}
        B_{k-1,i}=-A_{k-1,i}=C_{2k-1,k-i-1}.
    \end{equation}
    these coefficients are not zero by our assumption $p > 2k - 2$.
    
    We look at the homogeneous component of $f(x,y)$ of degree $2k-2$. That is 
    \begin{align*}
        f_{2k-2}(x,y)   &=h_{A,k-2}(x,y)x^k-\sigma_1(A)h_{A,k-1}(x,y)x^{k-1}\\
                        &+h_{B,k-2}(x,y)y^k-\sigma_1(B)h_{B,k-1}(x,y)y^{k-1}.
    \end{align*}
    First we look at the coefficient of the monomial $x^{k-1}y^{k-1}$. That is
    \[
        C_{2k-2,k-1}=0=-\sigma_1(A)A_{k-1,0}-\sigma_1(B)B_{k-1,0}.
    \]
    By Equation \ref{Ak-1} we conclude that $\sigma_1(A)=\sigma_1(B)$.

    Next by inspecting the coefficients of the monomials $x^{2k-2-j}y^j$ and $x^jy^{2k-2-j}$ with $1\le j \le k-2$ we conclude that
    \[
        C_{2k-2,2k-2-j}=A_{k-2,k-2-j}-\sigma_1(A)A_{k-1,k-1-j}
    \]
    and
    \[
        C_{2k-2,j}=B_{k-2,k-2-j}-\sigma_1(B)B_{k-1,k-1-j}.
    \]
    Since $C_{2k-2,2k-2-j}=-C_{2k-2,j}$ and $\sigma_1(A)A_{k-1,k-1-j}=-\sigma_1(B)B_{k-1,k-1-j}$ then $A_{k-2,k-2-j}=-B_{k-2,k-2-j}$.

    Now, we assume that for every $r<i$ with $0\le i\le k$ $\sigma_r(A)=\sigma_r(B)$ and $A_{k-1-r,j}=-B_{k-1-r,j}$. By taking the homogeneous component of $f$ of degree $2k-1-i$
    \begin{align*}
        f_{2k-1-i}(x,y) &=\sum_{j=0}^{i}(-1)^j \sigma_j(A)h_{A,k-1-i+j}(x,y)x^{k-j}\\
                        &+\sum_{j=0}^{i}(-1)^j \sigma_j(B)h_{B,k-1-i+j}(x,y)y^{k-j}.
    \end{align*}
    If $i$ is even $i=2r$. In this case, we look at the coefficients of the monomials $x^{k-r-1}y^{k-r}$ and $x^{k-r}y^{k-r-1}$.
    \begin{align*}
        \sigma_{2r}(C)C_{2k-2r-1,k-r-1} &=\sum_{j=0}^{r}(-1)^{r+j}\sigma_{r+j}(B)B_{k-r-1+j,j}\\
                                        &+\sum_{j=0}^{r-1}(-1)^{r+1+j}\sigma_{r+1+j}(A)A_{k-r+j,j}
    \end{align*}
    
    \begin{align*}
        \sigma_{2r}(C)C_{2k-2r-1,k-r}   &=\sum_{j=0}^{r}(-1)^{r+j}\sigma_{r+j}(A)A_{k-r-1+j,j}\\
                                        &+\sum_{j=0}^{r-1}(-1)^{r+1+j}\sigma_{r+1+j}(B)B_{k-r+j,j}.
    \end{align*}
    It follows from the induction hypothesis that 
    \[
        \rho_i = \sigma_i(B)B_{k-1,r} - \sigma_i(A)B_{k-1,r-1} 
    \]
    and
    \[
        -\rho_i = -\sigma_i(A)B_{k-1,r} + \sigma_i(B)B_{k-1,r-1} 
    \]
    where
    \begin{align*}
        \rho_i = \sigma_{2r}(C)C_{2k-2r-1,k-r-1} &- \sum_{j=0}^{r-1}(-1)^{r+j}\sigma_{r+j}(B)B_{k-r-1+j,j}\\ 
        &-\sum_{j=0}^{r-2}(-1)^{r+1+j}\sigma_{r+1+j}A_{k-r+j,j}.
    \end{align*}
    Therefore
    \[
        0 = (\sigma_i(B)-\sigma_i(A))(B_{k-1,r-1}+B_{k-1,r}),
    \]
    but
    \begin{align*}
        B_{k-1,r-1}+B_{k-1,r}   &=C_{2k-1,k-r}+C_{2k-1,k-r-1}\\
                                &=\frac{-2r(2k-1)(2k-2)!}{(k+r)!(k-r)!}\\
                                &\neq 0
    \end{align*}
    since $p>2k-1$. Hence
    \[
        \sigma_i(A) = \sigma_i(B).
    \]
    Similarly, if $i$ is odd, $i=2r+1\le k$. The coefficient of the monomial $x^{k-r-1}y^{k-r-1}$ is
    \[
        0=\sum_{j=0}^{r}(-1)^{r+1+j}\left(\sigma_{r+1+j}(A)A_{k-r-1+j,j} + \sigma_{r+1+j}(B)B_{k-r-1+j,j}\right)
    \]
    since $C_{2k-2r-2,k-r-1}=0$. Again, by our induction hypothesis $\sigma_{r+1+j}(A)=\sigma_{r+1+j}(B)$ for every $0\le j < r$ and $A_{k-r-1+j,j}=-B_{k-r-1+j,j}$ for every $0\le j \le r$. Hence 
    \[
        0=(\sigma_{i}(B)-\sigma_{i}(A))B_{k-1,r}.
    \]
    Again 
    \[
        B_{k-1,r}=\frac{-i}{k-r-1}\binom{2k-2}{k+r-1}\neq0.
    \]
    
    Now,  we inspect the coefficients of the monomials $x^{k+\ell}y^{k-i-1-\ell}$ and $x^{k-i-1-\ell}y^{k+\ell}$ for $2\le \ell\le k-i-1$. That is,
    \[
        \sigma_i(C)C_{2k-i-1,k+\ell}=\sum_{j=0}^{2r}(-1)^j\sigma_j(A)A_{k-i-1+j,\ell+j},
    \]
    and
    \[
        \sigma_i(C)C_{2k-i-1,k-i-1-\ell}=\sum_{j=0}^{2r}(-1)^j\sigma_j(B)B_{k-i-1+j,\ell+j}.
    \]
    Since $C_{2k-i-1,k+\ell}=-C_{2k-i-1,k-i-1-\ell}$, and by our induction hypothesis \[
        A_{k-i-1+j,\ell+j}=-B_{k-i-1+j,\ell+j}
    \]
    for $j\ge1$. We conclude that $A_{k-i-1,\ell}=-B_{k-i-1,\ell}$.

\end{proof}

 \bibliographystyle{elsarticle-num} 
 
\end{document}